\newtheorem{thm}{Theorem}[section]
\newtheorem{proposition}[thm]{Proposition}
\theoremstyle{definition}
\theoremstyle{remark}
\begin{document}

%%%%%%%%%%%%%%%%%%%%% Publisher's Area please ignore %%%%%%%%%%%%%%%
%
%\catchline{}{}{}{}{}
%
%%%%%%%%%%%%%%%%%%%%%%%%%%%%%%%%%%%%%%%%%%%%%%%%%%%%%%%%%%%%%%%%%%%%

\title{INVERSION OF A MAPPING ASSOCIATED\\
 WITH THE AOMOTO-FORRESTER SYSTEM}

\author{Raquel Caseiro}
\address{CMUC, Department of Mathematics, University of Coimbra
 \\ 3001-454 Coimbra, Portugal}
\email{raquel@mat.uc.pt}
\thanks{Raquel Caseiro was supported by PTDC/MAT/099880/2008
and  Centro de Matem\'{a}tica da
Universidade de Coimbra (CMUC), funded by the European Regional
Development Fund through the program COMPETE and by the Portuguese
Government through the FCT - Funda\c{c}\~{a}o para a Ci\^{e}ncia e a Tecnologia
under the project PEst-C/MAT/UI0324/2011. }
\author{Jean-Pierre Fran\c{c}oise}
\address{Universit\'e P.-M. Curie, Laboratoire J.-L. Lions, 4 Pl. Jussieu\\
Paris, 75005, France}
\email{Jean-Pierre.Francoise@upmc.fr }
\author{Ryu Sasaki}
\address{Yukawa Institute for Theoretical Physics, Kyoto University\\
Kyoto, 606-8502, Japan}
\email{ryu@yukawa.kyoto-u.ac.jp }

%\begin{history}
%\received{(Day Month Year)}
%\revised{(Day Month Year)}
%\end{history}

\begin{abstract}
This article is devoted to the study of a general class of Hamiltonian systems which extends the Calogero systems with external quadratic potential associated to any root system. The interest for such a class comes from a previous article of Aomoto and Forrester. We consider first the one-degree of freedom case and compute the Birkhoff series defined near each of its stationary points. In general, the analysis of the system motivates finding some expression for the inverses of a rational map introduced by Aomoto and Forrester. We derive here some diagrammatic expansion series for these inverses.
\end{abstract}

%%% ----------------------------------------------------------------------
\maketitle
%%% --------------------------------------

\keywords{Birkhoff normal form; Residue; Arrangement of hyperplanes; Hamilton-Jacobi equation; Diagrammatic Expansions.}

%\subjclass[2000]{Mathematics Subject Classification 2010: 37J05, 37J30}

\section{Introduction}
In \cite{ga-ma73}, Gallavotti and Marchioro derived a remarkable integral formula as a consequence of Feynmann-Kac path integral method applied to the quantum Calogero system. They posed the question of finding a purely classical proof. In \cite{fr88} the question was answered and the formula was derived by showing the existence of an associated symplectic action of the torus. In \cite{ao-fo00} an integral was computed for a much general setting of a Hamiltonian defined on the complement of an arrangement of hyperplanes. In this article, we call the Aomoto-Forrester Hamiltonian system, the system defined by:

\begin{equation}
H=\frac{1}{2}\sum_{j=1}^n\left\{y_j^2+\left[x_j-\sum_{h\in A}\frac{\lambda_h u_{h,j}}{<u_h,x>+u_{h,0}}\right]^2\right\},\quad
\omega=\sum_{j=1}^n dx_j{\wedge}dy_j, \lambda_h>0.
\end{equation}

The index $h$ enumerates an arrangement of {\it real} hyperplanes defined by the equations:
\begin{equation}
f_h(x):=<u_h,x>+u_{h,0}=u_{h,0}+\sum_{j=1}^n u_{h,j}x_j=0.
\end{equation}
In the particular case where the arrangement of hyperplanes defines the walls of Weyl chambers,
and hence where the complement of these hyperplanes is invariant under the action of a Weyl group, we recover the class of Hamiltonians considered by (\cite{bo-co-sa98}, \cite{bo-co-sa99}, \cite{bo-ma-sa00}, \cite{kh-po-sa00}, \cite{kh-sa01}) (both in quantum and classical cases).
We call this situation, the root system cases. For these root systems, the integral formula was also proved via the existence of an associated symplectic action of the torus in (\cite{ca-fr-sa00}, \cite{ca-fr-sa01}).
In contrast, the Aomoto-Forrester integral formula was proved by a complex analytic residue techniques (Griffith's residue).
It is remarkable that it covers a much more general situation. For instance, this one-dimensional system:
\begin{equation}
\begin{array}{l}
\displaystyle H=\frac{1}{2}y^2+\frac{1}{2}\left(x-\sum_{h=1}^N \frac{\lambda_h}{x-a_h}\right)^2,\\
\displaystyle \omega=dx{\wedge}dy,\, \lambda_h>0,
\end{array}
\end{equation}
provides already a quite meaningful example. It is known that if a ($n$-degrees of freedom) Hamiltonian System is associated to the symplectic action of an $n$-torus, it displays critical points of Birkhoff type and the Birkhoff normal form is reduced to linear terms.
\vskip 1pt
After the Aomoto-Forrester article, it was natural to ask the question of the existence of an associated symplectic action of the torus for the Aomoto-Forrester system.
This is rather easily disproved here with the one-dimensional system (1.3) defined above.
The system (1.3) displays $N+1$ critical points of Morse type. We are indeed able to derive a formula for the inverse of the Birkhoff series of each of these points using a Cauchy residue (Lagrange inversion formula).
We check on specific examples that in general these series do not reduce to linear terms.
\vskip 1pt
In a second part, we obtain partial results in the general case of the Aomoto-Forrester system. This system is likely to be non-integrable in general. Nevertheless, using some techniques (\cite{ab74}, \cite{ba-co-wr82},
\cite{zh05}) introduced around the Jacobian conjecture, we can derive some expansion for the inverses of the Aomoto-Forrester map.

\section{The inverse of the Birkhoff series in the one-dimensional case and the Lagrange inversion formula}
In this paragraph, we focus on the one-dimensional Hamiltonian:

\begin{equation}
H=\frac{1}{2}y^2+\frac{1}{2}\left(x-\sum_{j=1}^N \frac{\lambda_j}{x-a_j}\right)^2.
\end{equation}
This Hamiltonian is defined on bands $\left]a_i,a_{i+1}\right[$.
Under the condition $\lambda_j>0$, this HS displays a unique stationary point $(b_i,0), i=0,...,N$ on each interval  $\left]a_i,a_{i+1}\right[$ and these stationary points are minima and of Birkhoff type.
To each of these points one can associate a Birkhoff normal form. Birkhoff series are convergent for the systems with one degree of freedom. The convergence of the Birkhoff normal form for the systems with one degree of freedom was essentially shown by Siegel \cite{sm}.

In the coordinate system $(y,w)$, the system displays
\begin{equation}
\begin{array}{l}
\displaystyle H= \frac{1}{2}y^2+\frac{1}{2}w^2,\quad w=x-\sum_{j=1}^N\frac{\lambda_j}{x-a_j},\\
 \displaystyle \omega=dx{\wedge}dy.
 \end{array}
 \end{equation}
Write
\begin{equation}
w=f(x)=\frac{q(x)}{p(x)},\quad
q(x)=\prod_{k=0}^{N}(x-b_k), \quad p(x)=\prod_{j=1}^N(x-a_j).
\end{equation}
Translation of coordinates $x=b_k+\xi$ yields:
\begin{equation}
\begin{array}{l}
\displaystyle H=\frac{1}{2}y^2+\frac{1}{2}f(b_k+\xi)^2=
\frac{1}{2}y^2+\frac{1}{2}f'(b_k)^2\xi^2+O(\xi^3).
\end{array}
\end{equation}
Changing coordinates $X=f'(b_k)\xi$ rescale, locally near the critical point $(b_k,0)$ the Hamiltonian $H$ to the Morse function
\begin{equation}
H=\frac{1}{2}y^2+\frac{1}{2}X^2+...
\end{equation}
The $1$-form $\eta=xdy$ is cohomologous to $\frac{1}{f'(b_k)}Xdy$.
Let $\gamma=H^{-1}(c)$ be a circle of radius $c$ centered at $0$, positively oriented and parameterized by:
\begin{equation}
\gamma=\{y=\sqrt{2c}\,\sin\theta, w=\sqrt{2c}\,\cos\theta, \theta\in \left[0,2\pi\right[\}.
\end{equation}
As the variable $(y,w)$ circles around $\gamma$, the points $(x_k(w),y)$ where $x_k(w)$ is a root of the polynomial equation $w=f(x)$ circles around a vanishing cycle of the Morse critical point $(b_k,0)$.

Relative cohomology techniques  developed around the isochore Morse lemma (\cite{fr78}, \cite{fr98}, \cite{fr-ga-ga10}, \cite{fr-ga-ga13}) show that the derivative of the (inverse of) Birkhoff series is proportional to the period integral:
\begin{equation}
\int_{\gamma} x_k(w)dy,
\end{equation}
where $x_k(w)$  denotes the unique series obtained by inverting $w=f(x)$ near the point $(b_k,0)$. This shows that the computation of the (inverse of) Birkhoff series is essentially given by the inversion of the mapping $x\mapsto w=f(x)$.
This inverse series can be obtained in many different ways. Here we focus on the Lagrange inversion formula:

\begin{proposition}
Let
\begin{equation}
g_i(z)= \frac{\partial^{i-1}}{\partial z^{i-1}}\left\{\left[\frac{z-b_k}{f(z)}\right]^i\right\},
\end{equation}
then
\begin{equation}\label{eq:Lagrange:inversion}
x_k(w)=b_k+\sum_{i=1}^{+\infty} g_i(b_k)\frac{w^i}{i!}.
\end{equation}
\end{proposition}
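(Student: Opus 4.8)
The statement is the classical Lagrange--Bürmann inversion formula specialized to $f=q/p$, so the plan is to derive it from scratch by a Cauchy residue computation, which also fits the period-integral interpretation emphasized above. First I would record that $b_k$ is a \emph{simple} zero of $f$: since the stationary point $(b_k,0)$ is nondegenerate (a Birkhoff minimum), one has $f(b_k)=0$ and $f'(b_k)\neq 0$, so $\phi(x):=(x-b_k)/f(x)$ is holomorphic and nonvanishing on a disc about $b_k$. By the holomorphic inverse function theorem, $w=f(x)$ then has a unique holomorphic inverse $x_k(w)$ near $w=0$ with $x_k(0)=b_k$; write its Taylor expansion as $x_k(w)=b_k+\sum_{i\geq 1}c_i\,w^i$, so the whole task reduces to identifying $c_i$ with $g_i(b_k)/i!$.

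Next I would extract $c_i$ by Cauchy's coefficient formula,
\[
c_i=\frac{1}{2\pi\sqrt{-1}}\oint_{|w|=\varepsilon}\frac{x_k(w)-b_k}{w^{\,i+1}}\,dw,
\]
and perform the change of variables $w=f(x)$, under which the small circle in the $w$-plane is carried to a small positively oriented loop $C$ around $b_k$ by the inverse branch, $x_k(w)=x$, $dw=f'(x)\,dx$. This turns the integral into
\[
c_i=\frac{1}{2\pi\sqrt{-1}}\oint_{C}\frac{(x-b_k)\,f'(x)}{f(x)^{\,i+1}}\,dx.
\]
The decisive manipulation is the identity $f'(x)\,f(x)^{-i-1}=-\tfrac1i\,\tfrac{d}{dx}\bigl[f(x)^{-i}\bigr]$, after which integration by parts on the closed contour $C$ discards the boundary term and replaces $x-b_k$ by its derivative $1$, yielding
\[
c_i=\frac{1}{i}\cdot\frac{1}{2\pi\sqrt{-1}}\oint_{C}\frac{dx}{f(x)^{\,i}}=\frac{1}{i}\,\mathrm{Res}_{x=b_k}\frac{1}{f(x)^{\,i}}.
\]

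Finally I would compute this residue using the holomorphic function $\phi$. Writing $f(x)^{-i}=\phi(x)^i/(x-b_k)^i$, the residue at $b_k$ is the coefficient of $(x-b_k)^{i-1}$ in the Taylor series of $\phi(x)^i$, that is,
\[
\mathrm{Res}_{x=b_k}\frac{1}{f(x)^{\,i}}=\frac{1}{(i-1)!}\left.\frac{\partial^{\,i-1}}{\partial x^{\,i-1}}\phi(x)^i\right|_{x=b_k}=\frac{g_i(b_k)}{(i-1)!},
\]
so that $c_i=g_i(b_k)/\bigl(i\,(i-1)!\bigr)=g_i(b_k)/i!$, as claimed. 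The only genuine point requiring care is the validity of the change of variables together with the disappearance of the boundary term: both are guaranteed by $f'(b_k)\neq 0$, which makes $f$ a biholomorphism from a neighborhood of $b_k$ onto a neighborhood of $0$ and keeps $C$ a genuine closed loop. I expect this to be the main (and essentially only) obstacle; everything else is the formal residue bookkeeping above. Convergence of the resulting series is not an additional issue, since it is simply the Taylor series of the holomorphic function $x_k$, consistent with Siegel's convergence result quoted above.
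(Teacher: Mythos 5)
Your proof is correct and follows essentially the same route as the paper: both are Cauchy--residue derivations of Lagrange inversion, hinging on the change of variables $w=f(x)$, the integration by parts $f'f^{-i-1}=-\tfrac1i\,(f^{-i})'$, and the evaluation of $\mathrm{Res}_{x=b_k}f^{-i}$ as the $(i-1)$-st derivative of $\bigl[(x-b_k)/f(x)\bigr]^i$ at $b_k$. The only cosmetic difference is that the paper applies the Cauchy integral formula to $x_k$ and expands the kernel as a geometric series in $w$, whereas you extract each Taylor coefficient directly via the Cauchy coefficient formula; the two are equivalent, and your version even sidesteps the sum--integral interchange.
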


\begin{proof}
Let $\displaystyle \gamma_0$ be a closed curve around $0$ and $\displaystyle\gamma_k=x_k(\gamma_0)$ such that $f$ has no more singularities inside $\gamma_k$ than $b_k$.
 Cauchy integral formula reads,
\begin{eqnarray*}
x_k(w)&=&\displaystyle\frac{1}{2\sqrt{-1}\pi}\int_{\gamma_k}\frac{x_k(z)}{z-w}dz.
\end{eqnarray*}
Making $z=f(x)$ we have
\begin{eqnarray*}
x_k(w)&=&\displaystyle \frac{1}{2\sqrt{-1}\pi}\int_{\gamma_k}\frac{x f'(x)}{f(x)-w}dx\\
&=&\displaystyle \frac{1}{2\sqrt{-1}\pi}\int_{\gamma_k}x\frac{f'(x)}{f(x)}\frac{1}{1-\frac{w}{f(x)}}dx\\
&=&\displaystyle \frac{1}{2\sqrt{-1}\pi}\sum_{i\geq 0}\int_{\gamma_k}x\frac{f'(x)}{f(x)^{i+1}}w^idx\\
&=&\displaystyle\frac{1}{2\sqrt{-1}\pi}\int_{\gamma_k}x\frac{f'(x)}{f(x)}dx + \frac{1}{2\sqrt{-1}\pi}\sum_{i\geq 1}w^i\int_{\gamma_k}x\frac{f'(x)}{f(x)^{i+1}}dx.
\end{eqnarray*}
By residue formula,
\begin{eqnarray*}
x_k(w)&=& \displaystyle b_k + \sum_{i\geq 1}w^i{\rm Res}_{x=b_k}(x-b_k)\frac{f'(x)}{f(x)^{i+1}}\\
&=& \displaystyle b_k + \sum_{i\geq 1}\frac{w^i}{i}{\rm Res}_{x=b_k}\frac{1}{f(x)^i}\\
&=& \displaystyle b_k + \sum_{i\geq 1} \frac{w^i}{i} \, \frac{1}{(i-1)!}\frac{\partial^{i-1}}{\partial x^{i-1}}\left.\left[\frac{(x-b_k)^i}{f(x)^i}\right]\right|_{x=b_k},
\end{eqnarray*}
and Lagrange integral formula follows.
\end{proof}

%It reads as follows:
%
%Denote
%\begin{equation}
%g_i(z)= \frac{\partial^{i-1}}{\partial z^{i-1}}\left\{\left[\frac{z-b_k}{f(z)}\right]^i\right\},
%\end{equation}
%then
%\begin{equation}
%x_k(w)=b_k+\sum_{i=1}^{+\infty} g_i(b_k)\frac{w^i}{i!}.
%\end{equation}

Note that the closed formula derived above for the coefficients of this series can be easily implemented. We did it with Mathematica.
 First two terms $g_1(b_k)$ and $g_2(b_k)$ are computed below:
\begin{align}
 g_1(b_k)&=\frac{\prod_{j=1}^M (b_k-a_j)}{\prod_{j=0,j\neq k}^M (b_k-b_j)},\\
 g_2(b_k)&=2\sum_{l=1,l\neq k}^M\left\{-\frac{(b_k-a_l)^2\cdot \prod_{j=1,j\neq l}^M(b_k-a_j)^2}{(b_k-b_l)^3\cdot\prod_{j=0,j\neq k,l}^M (b_k-b_j)^2}\right.\nonumber\\
&\left. \qquad \qquad \quad \ +\frac{(b_k-a_l)\cdot \prod_{j=1,j\neq l}^M(b_k-a_j)^2}{(b_k-b_l)^2\cdot\prod_{j=0,j\neq k,l}^M (b_k-b_j)^2}\right\}.
\end{align}

$g_3(b_k)$ is already quite complicated and the general form might not be suitable for printing.

The final step is to include this formula in the integral:

\begin{equation}
\int_{\gamma=H^{-1}(c)} x_k(w)dy.
\end{equation}
This yields ultimately:
\begin{equation}
\int_{\gamma=H^{-1}(c)} x_k(w)dy=\sum_i \int_{0}^{2\pi} \frac{1}{i!}g_i(b_k)c^{i+1}(\cos\theta)^{i+1}d\theta
\end{equation}
\begin{equation}
=\sum_p \frac{1}{(2p-1)!}g_{2p-1}(b_k)c^{2p}\int_0^{2\pi} \cos^{2p}\theta d\theta=
\sum_p 2\pi\frac{2p}{(2^pp!)^2}g_{2p-1}(b_k)c^{2p}.
\end{equation}

%%%%%%%%%%%%%%%%%%%%%%%%%%%%% The general case %%%%%%%%%%%%%%%%%%%%%%%%

\section{The general case and the inverses of the Aomoto-Forrester map}

Let $A$ be a finite arrangement of hyperplanes in the $n$-dimensional complex affine space $\mathbb{C}^n$. Let $N(A)$ be the union of hyperplanes of $A$ in $\mathbb{C}^n$ and $M(A)$ be its complement. It is further assumed that $A$ is real, meaning that the defining function of every hyperplane $f_h(z)=u_{h,0}+\sum_{i=1}^n u_{h,i}z_i$ has real coefficients. We define the Aomoto-Forrester system on $T^*(M(A)\cap\mathbb{R}^n)$ by the Hamiltonian function:
\begin{equation}
H=\frac{1}{2}\sum_{i=1}^n y_i^2+\frac{1}{2}\sum_{i=1}^n\left(x_i-\sum_{h\in A}\frac{\lambda_h u_{h,i}}{f_h(x)}\right)^2.
\end{equation}
The isochore geometry of this Hamiltonian system is, in particular, devoted to the study of the canonical partition function
\begin{equation}
\int {\rm exp}({-\beta H})\Omega,\quad \Omega=\left(dx{\wedge}dy\right)^n.
\end{equation}
In the purpose of computing this function, Aomoto and Forrester introduced the mapping:
\begin{equation}
F:z\mapsto w, \quad w_i=z_i-\sum_{h\in A} \frac{\lambda_h u_{h, i}}{f_h(z)}.
\end{equation}
A connected component of $M(A)\cap\mathbb{R}^n$ is called a chamber. Aomoto and Forrester proved that the mapping $F$ displays in restriction to any chamber $\Delta$ a unique analytic inverse. Note that this map displays a special form which seems to have not been exploited before. It writes:
\begin{equation}
F: z\mapsto w,\quad  w_i=z_i-\frac{\partial P}{\partial z_i},
\end{equation}
with:
\begin{equation}
P={\rm log}\left(\prod_{h\in A} f_h(z)^{\lambda_h}\right).
\end{equation}
It is possible to show, for instance by using the Abyankhar formula for the inverse of a map (\cite{ab74}, \cite{ba-co-wr82}) that the inverse of a map of this type is also a map of this type:
\begin{equation}
G: w\mapsto z, z_i= w_i+\frac{\partial Q}{\partial w_i}.
\end{equation}
Inspired on the recent contributions to the Jacobian conjecture (\cite{zh05}), we propose to characterize these inverses as follows. We consider for that purpose the mapping
\begin{equation}
F_t: z\mapsto F_t(z)=z-t{\nabla}P.
\end{equation}
For $t$ small, this mapping is invertible. We give an expansion for this inverse which exists for $t$ small.
Aomoto-Forrester showed that the map $F_t$ is invertible till $t=1$ when it is restricted to a chamber $\Delta$.
These local inverses are local restrictions of extensions of the inverse we described below till $t=1$.
Note that there is no apparent mechanical meaning of the multiple $t$. It can be seen as a simultaneous scaling of the parameters $\lambda_h$.

\begin{thm}
Let $t$ be a small parameter, consider the deformation $F_t(z)=z-t{\nabla}P$. The inverse map of $z\mapsto F_t(z)$ can be written:
\begin{equation}
G_t(z)=z+t{\nabla}Q_t(z),
\end{equation}
where $Q_t(z)$ is the unique solution of the Cauchy problem for the Hamilton-Jacobi equation:
\begin{equation}
\begin{array}{l}
\displaystyle \frac{\partial Q_t(z)}{\partial t}= \frac{1}{2}\langle{\nabla}Q_t, {\nabla}Q_t\rangle,\\
\displaystyle Q_{t=0}(z)=P(z).
\end{array}
\end{equation}
\end{thm}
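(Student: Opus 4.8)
The plan is to recognize the Cauchy problem as a Hamilton--Jacobi equation solvable by the method of characteristics, and to show that its characteristic flow is exactly the family $F_t$. First I would write the equation $\partial_t Q = \frac12\langle\nabla Q,\nabla Q\rangle$ in the form $\partial_t Q + H(\nabla Q) = 0$ with Hamiltonian $H(p) = -\frac12\langle p,p\rangle$. Since $H$ is independent of the base point, the characteristics carry a constant momentum $p(t)\equiv p_0$ and travel along the straight lines $x(t) = x_0 - t\,p_0$, while the value propagates by $Q_t(x(t)) = Q_0(x_0) - \frac{t}{2}\langle p_0,p_0\rangle$. Imposing $Q_0 = P$ forces $p_0 = \nabla P(x_0)$, so the base characteristic issued from $x_0$ is precisely $t\mapsto x_0 - t\,\nabla P(x_0) = F_t(x_0)$. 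Thus the characteristic flow of the Hamilton--Jacobi equation coincides with the Aomoto--Forrester deformation $F_t$.

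Next I would use this to define $Q_t$ and to read off its gradient. For $t$ small the Jacobian $DF_t = I - t\,\mathrm{Hess}\,P$ is invertible and $F_t$ is a diffeomorphism onto its image (on a chamber $\Delta$, which is convex, this persists and, by Aomoto--Forrester, up to $t=1$), so the characteristics do not cross and $Q_t$ is a well-defined classical solution given by
\[
Q_t(w) = P(x_0) - \frac{t}{2}\langle\nabla P(x_0),\nabla P(x_0)\rangle, \qquad x_0 = G_t(w) := F_t^{-1}(w).
\]
Differentiating in $w$ and using $\nabla\langle\nabla P,\nabla P\rangle = 2\,\mathrm{Hess}\,P\cdot\nabla P$ together with the symmetry of $\mathrm{Hess}\,P$ (so that $DG_t = (I - t\,\mathrm{Hess}\,P)^{-1}$ is symmetric), the Hessian terms collapse and I obtain the key identity $\nabla Q_t(w) = \nabla P(G_t(w))$. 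Since $x_0 = G_t(w)$ satisfies $w = x_0 - t\,\nabla P(x_0)$, that is $G_t(w) = w + t\,\nabla P(G_t(w)) = w + t\,\nabla Q_t(w)$, this is exactly the asserted form of the inverse.

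Finally I would confirm that this $Q_t$ indeed solves the Cauchy problem and is unique. The initial condition $Q_0 = P$ is built in; to check the equation at fixed $w$ I would differentiate the display in $t$, using $\partial_t G_t(w) = (I - t\,\mathrm{Hess}\,P)^{-1}\nabla P(G_t(w))$ (obtained by differentiating $F_t(G_t(w)) = w$), after which the same symmetric-Hessian cancellation yields $\partial_t Q_t(w) = \frac12\langle\nabla P(x_0),\nabla P(x_0)\rangle = \frac12\langle\nabla Q_t,\nabla Q_t\rangle$. Uniqueness follows because any classical solution must propagate along the same characteristics, which for small $t$ foliate the domain. The one step deserving the most care is the gradient computation $\nabla Q_t = \nabla P\circ G_t$: it is here that the \emph{gradient structure} $F_t = \mathrm{id} - t\,\nabla P$ is essential, for the symmetry of $\mathrm{Hess}\,P$ is exactly what makes $G_t - \mathrm{id}$ a gradient field and forces the Hessian contributions to cancel. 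For a general perturbation $F_t = \mathrm{id} - t\,V$ with $V$ not a gradient, no such potential $Q_t$ would exist.
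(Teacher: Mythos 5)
Your proof is correct, but it runs in the opposite direction from the paper's. The paper (following Zhao) works formally and algebraically: it characterizes the inverse of $F_t(z)=z-tM(z)$ by the composition identities $N_t(F_t(z))=M(z)$ and $M(G_t(z))=N_t(z)$, differentiates the first in $t$ and composes with $G_t$ to obtain the inviscid Burgers equation $\partial_t N_t=J(N_t)N_t$ for a general (not necessarily gradient) perturbation, and only then specializes to $M=\nabla P$, $N_t=\nabla Q_t$, where the Burgers equation is recognized as the gradient of the Hamilton--Jacobi equation. You instead start from the PDE: you solve the Hamilton--Jacobi problem by characteristics, observe that with $H(p)=-\tfrac12\langle p,p\rangle$ the characteristic flow is precisely the straight-line deformation $F_t(z)=z-t\nabla P(z)$, and read off the inverse from the constancy of the momentum, $\nabla Q_t = \nabla P\circ G_t$ (your symmetric-Hessian computation verifying this is sound, and it is indeed where the gradient structure enters). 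Each route has its advantages: yours is constructive and geometric --- it produces the closed formula $Q_t(w)=P(G_t(w))-\tfrac{t}{2}\langle\nabla P(G_t(w)),\nabla P(G_t(w))\rangle$, explains \emph{why} a Hamilton--Jacobi equation appears at all, and gives genuine classical solutions for small $t$ with a natural uniqueness argument --- whereas the paper's formal computation covers the non-gradient case (Burgers) in the same stroke and, more importantly for what follows, feeds directly into the power-series ansatz $Q_t=\sum_m t^m Q^{[m]}$ and the recursion $mQ^{[m]}=\tfrac12\sum_{i+j=m-1}\langle\nabla Q^{[i]},\nabla Q^{[j]}\rangle$ on which the paper's diagrammatic expansion is built. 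One small caution: your parenthetical claim that invertibility on a chamber persists up to $t=1$ is quoted from Aomoto--Forrester rather than proved by your argument, and is not needed for the theorem as stated (small $t$), so it should be flagged as such rather than folded into the proof.
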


We include a proof to be self-contained although it follows the lines of (\cite{zh05}). Note that with $U_t={\nabla}Q_t$, the equation can be alternatively written:
\begin{equation}
\frac{\partial U_t(z)}{\partial t}=J(U_t(z))U_t(z),
\end{equation}
where $J$ is the Jacobian matrix, which is the inviscid $n$-dimensional Burgers equation (cf. \cite{zh05}).

\begin{proof}
More generally, the formal inverse of $F_t(z)=z-tM(z)$ is the formal series $G_t(z)=z+tN_t(z)$ if and only if:
\begin{equation}
\begin{array}{l}
\displaystyle N_t(F_t(z))=M(z),\\
\displaystyle M(G_t(z))=N_t(z).
\end{array}
\end{equation}
This yields the equations:

\begin{equation}
0=\frac{\partial}{\partial t}[N_t(F_t(z))],
\end{equation}

\begin{equation}
0=\frac{\partial N_t}{\partial t}(F_t(z))+J(N_t)(F_t(z)))\frac{\partial F_t}{\partial t},
\end{equation}

\begin{equation}
0=\frac{\partial N_t}{\partial t}(F_t(z))-J(N_t)(F_t(z)))M.
\end{equation}
After composing with $G_t(z)$ from the right, this displays:
\begin{equation}
\frac {\partial N_t}{\partial t}=J(N_t)M(G_t)=J(N_t)N_t.
\end{equation}
The theorem is proved in particular in the gradient case where, $M={\nabla}P$, $N_t={\nabla}Q_t$,  ${\nabla}Q_t(F_t)={\nabla}P$ and ${\nabla}P(G_t)={\nabla}Q_t$ as a consequence of:

\begin{equation}
\frac{\partial}{\partial z_i}\left(\frac{\partial Q_t}{\partial t}\right)=\sum_j \frac{\partial^2 Q_t}{\partial z_i \partial z_j}\frac{\partial Q_t}{\partial z_j}
=\frac{\partial}{\partial z_i}\frac{1}{2} \langle{\nabla} Q_t, {\nabla} Q_t\rangle.
\end{equation}

\end{proof}
 This yields an expansion series for the inverse map obtained by solving the Hamilton-Jacobi equation (3.8) as follows. Write:

 \begin{equation}
 \begin{array}{l}
 \displaystyle Q_t(z)=\sum_{m=0}^{+\infty} t^{m}Q^{[m]}(z),\\
 \displaystyle Q^{[0]}(z)=P(z),
 \end{array}
 \end{equation}
 and identify terms of equal powers in $t$ displays the recurrency relation:

 \begin{equation}
 mQ^{[m]}(z)=\frac{1}{2}\sum_{i+j=m-1} \langle{\nabla} Q^{[i]}, {\nabla} Q^{[j]}\rangle{(z)}.
 \end{equation}

 Introduce at this point the notation $u_h$ for the $n$-dimensional vector $u_{h,i}, i=1,...,n$ where the index $h$ enumerates the hyperplanes of the arrangement $A$. We find successively:

 \begin{equation}
 Q^{[1]}(z)=\frac{1}{2}\sum_{(h,k)\in A{\times}A} \frac{\lambda_h\lambda_k \langle u_h, u_k \rangle}{f_h(z)f_k(z)},
 \end{equation}

\begin{equation}
 Q^{[2]}(z)=-\frac{1}{2}\sum_{(h,k,l)\in A{\times}A{\times}A} \frac{\lambda_h\lambda_k\lambda_l \langle u_h, u_k \rangle\langle u_h, u_l \rangle}{f_k(z)f_h^2(z)f_k(z)}.
\end{equation}

Each function $Q^{[m]}$, $m\geq 1$, may be represented by a graph. Each connected component of the graph is a non-labeled tree and
represents a different summand of $Q^{[m]}$. All the trees have $m+1$ vertices connected by $m$ edges.
At a vertex $h_i$ a factor $\lambda_{h_i}$ is attached and  an edge connecting  vertices $h_i$ and $h_j$
gives a  factor $\displaystyle \frac{\langle u_{h_i},u_{h_j}\rangle}{f_{h_i}(z)f_{h_j}(z)}$
to the corresponding summand. The whole expression is summed over $\sum_{(h_1,\ldots,h_{m+1})\in A^{m+1}}$. %Since  on $Q_m$  the summation is over all hyperplanes of  arrangement $A$,  trees are non-labeled.
With this representation we see that:

%%%%%% Diagrams %%%%%%%
\begin{eqnarray*}
%%%%%% Q_1 %%%%%
Q^{[1]}&:&
 \frac12\begin{picture}(15,12)
\put(2,3){\circle{1}} \put(12,3){\circle{1}}  \put(2,3){\line(1,0){10}}
\end{picture}\\
%%%%% Q^{[2]} %%%%%%%
Q^{[2]}&:&%\frac12\eval{\nabla Q^{[0]},\nabla Q^{[1]}}=
-\frac12\begin{picture}(25,13)
\put(2,3){\circle{1}} \put(2,3){\line(1,0){10}} \put(12,3){\circle{1}} \put(12,3){\line(1,0){10}}  \put(22,3){\circle{1}}
\end{picture}\\
%%%%% Q_3 %%%%%%%
Q^{[3]}&:&\frac13
\begin{picture}(25,13)
\put(2,3){\circle{1}} \put(2,3){\line(1,0){10}} \put(12,3){\circle{1}} \put(12,3){\line(1,0){10}}  \put(22,3){\circle{1}}
\put(12,3){\line(0,1){10}}  \put(12,13){\circle{1}}
\end{picture}
+ \frac12 \begin{picture}(45,13)
\put(2,3){\circle{1}} \put(2,3){\line(1,0){10}} \put(12,3){\circle{1}} \put(12,3){\line(1,0){10}}  \put(22,3){\circle{1}}
\put(22,3){\line(1,0){10}}  \put(32,3){\circle{1}}
\end{picture}\\
%%%%%% Q_4 %%%%%%
Q^{[4]} &:& -\frac12\begin{picture}(45,13)
\put(2,3){\circle{1}} \put(12,3){\circle{1}}  \put(22,3){\circle{1}} \put(32,3){\circle{1}} \put(42,3){\circle{1}}
\put(2,3){\line(1,0){10}} \put(12,3){\line(1,0){10}} \put(22,3){\line(1,0){10}} \put(32,3){\line(1,0){10}}
\end{picture}
- \begin{picture}(35,14)
\put(2,3){\circle{1}} \put(12,3){\circle{1}}  \put(22,3){\circle{1}} \put(12,13){\circle{1}}
\put(32,3){\circle{1}} \put(2,3){\line(1,0){10}} \put(12,3){\line(1,0){10}} \put(22,3){\line(1,0){10}} \put(12,3){\line(0,1){10}}
\end{picture}
-\frac14 \begin{picture}(25,13)
\put(2,3){\circle{1}} \put(12,3){\circle{1}}  \put(22,3){\circle{1}} \put(12,13){\circle{1}}
\put(12,-7){\circle{1}} \put(2,3){\line(1,0){10}} \put(12,3){\line(1,0){10}} \put(12,3){\line(0,1){10}}
\put(12,2){\line(0,-1){10}}
\end{picture}\\
%%%%%%% Q^{[5]} %%%%%%
Q^{[5]} &:&
\frac{1}{2}\begin{picture}(55,13)
\put(2,3){\circle{1}} \put(12,3){\circle{1}}  \put(22,3){\circle{1}} \put(32,3){\circle{1}} \put(42,3){\circle{1}}
\put(52,3){\circle{1}} \put(2,3){\line(1,0){10}} \put(12,3){\line(1,0){10}} \put(22,3){\line(1,0){10}} \put(32,3){\line(1,0){10}}
\put(42,3){\line(1,0){10}}
\end{picture}
+\begin{picture}(45,14)
\put(2,3){\circle{1}} \put(12,3){\circle{1}}  \put(22,3){\circle{1}} \put(12,13){\circle{1}} \put(32,3){\circle{1}}
\put(42,3){\circle{1}}  \put(2,3){\line(1,0){10}} \put(12,3){\line(1,0){10}} \put(22,3){\line(1,0){10}} \put(32,3){\line(1,0){10}}
\put(12,3){\line(0,1){10}}
\end{picture}
+\begin{picture}(45,14)
\put(2,3){\circle{1}} \put(12,3){\circle{1}}  \put(22,3){\circle{1}}   \put(22,13){\circle{1}} \put(32,3){\circle{1}}
\put(42,3){\circle{1}}  \put(2,3){\line(1,0){10}} \put(12,3){\line(1,0){10}} \put(22,3){\line(1,0){10}} \put(32,3){\line(1,0){10}}
\put(22,3){\line(0,1){10}}
\end{picture}
+ \frac12\begin{picture}(35,14)
\put(2,3){\circle{1}} \put(12,3){\circle{1}}  \put(22,3){\circle{1}} \put(12,13){\circle{1}} \put(22,13){\circle{1}}
\put(32,3){\circle{1}}
 \put(2,3){\line(1,0){10}} \put(12,3){\line(1,0){10}} \put(22,3){\line(1,0){10}}  \put(12,3){\line(0,1){10}}
 \put(22,3){\line(0,1){10}}
\end{picture}
+\begin{picture}(35,28)
\put(2,3){\circle{1}} \put(12,3){\circle{1}}  \put(22,3){\circle{1}} \put(12,13){\circle{1}} \put(32,3){\circle{1}}
 \put(12,-7){\circle{1}} \put(2,3){\line(1,0){10}} \put(12,3){\line(1,0){10}} \put(22,3){\line(1,0){10}}
 \put(12,3){\line(0,1){10}}\put(12,2){\line(0,-1){10}}
\end{picture}
+\frac15\begin{picture}(25,26)
\put(2,3){\circle{1}} \put(12,3){\circle{1}}  \put(22,3){\circle{1}} \put(2,13){\circle{1}}  \put(22,13){\circle{1}}
\put(12,-7){\circle{1}} \put(2,3){\line(1,0){10}} \put(12,3){\line(1,0){10}} \put(12,3){\line(-1,1){10}} \put(12,3){\line(1,1){10}}
\put(12,2){\line(0,-1){10}}
\end{picture}
\end{eqnarray*}

Further analysis of this diagrammatic expansion will be developed in relation with finding integrable cases in another publication.
\section{Conclusion}
To conclude, we have seen in this article that the (inverse of) Birkhoff series of the $1$-dimensional Aomoto-Forrester system is essentially given by the inverses of the Aomoto-Forrester mapping $x\mapsto w=f(x)$.
In the $1$-dimensional case, it is given by Lagrange's inversion formula. In the general case, we cannot consider anymore the Birkhoff series (which may be divergent) but we can still obtain some results on the inverses of the Aomoto-Forrester mapping. We have focussed in this article on the use of Zhao's formula based on solving a Hamilton-Jacobi equation. In \cite{zh11}, the links with other formulas for the inverse (like the Abhyankar-Gurjar formula) are explained. In the $1$-dimensional case, the Abhyankar-Gurjar formula and the Lagrange formula coincides. A recent discussion of the links between perturbation theory of classical Hamiltonian Systems, QFT and the Lagrange inversion formula has been made by G. Gallavotti \cite{G13}.

\vfill\eject


\begin{thebibliography}{0}

\bibitem{ab74}
S. S. Abhyankar,  {\it Expansion techniques in algebraic geometry}
Published by Purdue University in 1974.

\bibitem{ao-fo00}
K. Aomoto, P. J. Forrester,
On a Jacobian Identity Associated with real Hyperplanes arrangements,
{\it Compositio Mathematica}, {\bf 121} (2000) 263--295.

\bibitem{ba-co-wr82}
H. Bass, E. H. Connell and D. Wright,
The Jacobian conjecture: reduction of degree and formal expansion of
the inverse, {\it Bull. (New Series) of the American Mathematical Society}
{\bf vol. 7, n. 2} (1982) 287--330.

\bibitem{bo-co-sa98}
 A. J. Bordner, E. Corrigan and R. Sasaki,
Calogero-Moser models I: a new formulation,
{\it Prog. Theor. Phys.} {\bf 100} (1998) 1107--1129.

\bibitem{bo-co-sa99}
A. J. Bordner, E. Corrigan and R. Sasaki,
Generalized Calogero-Moser models and  universal Lax pair operators,
{\it Prog. Theor. Phys.} {\bf 102}  (1999)  499--529.

\bibitem{bo-ma-sa00}
A. J. Bordner, N. S. Manton and R. Sasaki,
Calogero-Moser models V:  Supersymmetry, and Quantum Lax Pair,
{\it Prog. Theor. Phys.} {\bf 103} (2000) 463--487.

\bibitem{ca-fr-sa00}
R. Caseiro, J.-P. Fran\c{c}oise and R. Sasaki,
Algebraic Linearization of Dynamics of Calogero Type for any Coxeter Group,
{\it J. Math. Phys.} {\bf 41} (2000) 4679--4986.

\bibitem{ca-fr-sa01}
R. Caseiro, J.-P. Fran\c{c}oise and R. Sasaki,
Quadratic algebra associated with rational Calogero-Moser models,
{\it J. Math. Phys.} {\bf 42} (2001) 5329--5340.

\bibitem{fr88}
J. P. Fran\c{c}oise, Canonical partition functions of
Hamiltonian systems and the stationary phase formula, {\it Commun. Math. Phys.}
{\bf 117}, (1988) 37--47.

\bibitem{fr78}
J.-P. Fran\c{c}oise, Mod\`ele local simultan\'e d'une fonction et d'une forme de volume,
{\it Ast\'erisque} {\bf 59-60} (1978) 119--130.

\bibitem{fr98}
J.-P. Fran\c{c}oise, The successive derivatives of the period function of a plane vector field,
{\it Journal of Differential Equations} {\bf 146} (1998) 320--335.

\bibitem{fr-ga-ga10}
J.-P. Fran\c{c}oise, P. Garrido and G. Gallavotti,
Pendulum, elliptic functions and relative cohomology classes,
{\it J. Math. Phys.} {\bf 51} (2010) 032901.

\bibitem{fr-ga-ga13}
J.-P. Fran\c{c}oise, P. Garrido and G. Gallavotti,
Rigid motions: Action-angles, relative cohomology and polynomials with roots on the unit circle,
{\it J. Math. Phys.} {\bf 54} (2013) 032901.

\bibitem{G13}
G. Gallavotti,
Aspects of Lagrange's Mechanics and their legacy,
{\it Il segno di Lagrange nella Matematica contemporanea}
Torino, Politecnico, 10 April 2013.

\bibitem{ga-ma73}
G. Gallavotti, C. Marchioro,
On the calculation of an integral,
{\it J. Math. Anal. Appl.} {\bf 44} (1973) 661--675.

\bibitem{kh-sa01}
S. P. Khastgir and R. Sasaki,
Liouville Integrability of Classical Calogero-Moser Models,
{\it Phys. Lett.} {\bf A279} (2001) 189--193.

\bibitem{kh-po-sa00}
S. P. Khastgir, A. J. Pocklington and R. Sasaki,
Quantum Calogero-Moser Models: Integrability for all Root Systems,
{\it J.\ Phys.} {\bf A33} (2000) 9033--9064.

\bibitem{sm} C. L. Siegel and J. Moser, \textit{Lectures on Celestial Mechanics}, Classics in Mathematics, Springer-Verlag, Reprint of the 1st ed., Berlin-Heidelberg-New York, 1995.

\bibitem{zh05}
W. Zhao,
Inversion problem, Legendre transform and inviscid Burgers'equations,
{\it Journal of Pure and Applied Algebra} {\bf 199} (2005), 299--317.

\bibitem{zh11}
W. Zhao,
 New proofs for the Abhyankar-Gurjar inversion formula and the
equivalence of the Jacobian conjecture and the vanishing
conjecture,
{\it  Proc. Amer. Math. Soc.} 139 (2011), no. 9, 3141--3154.

\end{thebibliography}
\end{document}